\newtheorem{definition}{Definition}[section]
\newtheorem{remark}{Remark}[section]
\newtheorem{lemma}[definition]{Lemma}
\newtheorem{corollary}[definition]{Corollary}
\newenvironment{customthm}[1]
  {\innercustomthm}
  {\endinnercustomthm}
\newenvironment{customcrl}[1]
{\innercustomcrl}
{\endinnercustomcrl}
\newenvironment{customdef}[1]
{\innercustomdef}
{\endinnercustomdef}
\begin{document}
\setcounter{page}{1}

\begin{center}
{\LARGE \bf  Number of stable digits of any integer tetration} %
\vspace{8mm}

{\Large \bf Marco Ripà$^1$ and Luca Onnis$^2$}
\vspace{3mm}

$^1$ sPIqr Society, World Intelligence Network \\ 
Rome, Italy \\
e-mail: \url{marcokrt1984@yahoo.it}
\vspace{2mm}

$^2$ Independent researcher \\ 
Cagliari, Italy \\
e-mail: \url{luca.onnis02@gmail.com}
\vspace{2mm}

\end{center}
\vspace{10mm}

\noindent
{\bf Abstract:}  In the present paper we provide a formula that allows to compute the number of stable digits of any integer tetration base $a\in\mathbb{N}_0$. The number of stable digits, at the given height of the power tower, indicates how many of the last digits of the (generic) tetration are frozen. Our formula is exact for every tetration base which is not coprime to $10$, although a maximum gap equal to $V(a)+1$ digits (where $V(a)$ denotes the constant congruence speed of $a$) can occur,  in the worst-case scenario, between the upper and lower bound. In addition, for every $a>1$ which is not a multiple of $10$, we show that $V(a)$ corresponds to the $2$-adic or $5$-adic valuation of $a-1$ or $a+1$, or even to the $5$-adic order of $a^{2}+1$, depending on the congruence class of $a$ modulo $20$. \\
{\bf Keywords:} Tetration, Exponentiation, Congruence speed, Modular arithmetic, Stable digits,  $p$-adic valuation. \\ 
{\bf 2020 Mathematics Subject Classification:} 11A07, 11A15. 
\vspace{5mm}

\section{Introduction} \label{sec:Intr}

The aim of this paper is to give a general formula which returns the number of stable digits \cite{3,7,13} of the tetration
\[
^{b}a = \begin{cases} a \mbox{ }\mbox{ }\mbox{ }\mbox{ }\mbox{ }\mbox{ }\mbox{ }\mbox{ }\mbox{ \hspace{1.2mm} if $b = 1$} \\
a^{(^{(b-1)}a)}  \mbox{ if $b\geq 2$}
\end{cases}\hspace{-3.5mm},
\]
for any $a\in\mathbb{N}_0$, at any given height $b\in\mathbb{N}-\{0\}$ \cite{4,5}. Let us consider the standard decimal numeral system (radix-$10$). Thus, we are interested in an easy way to find the value of $n\in\mathbb{N}_0$ such that $^{b}a\equiv{^{b+1}a}\pmod{10^{n}} \wedge ^{b}a \not\equiv{^{b+1}a}\pmod{10^{n+1}}$.

In order to simplify the notations, let us invoke the definition of the congruence speed of $^{b}a$ from Reference \cite{12}, and then (Definition \ref{def1.3}) we will extend it to the base $a = 0$.

\begin{definition} \label{def1.1}
Let $n\in\mathbb{N}_0$ and assume that $a\in\mathbb{N}-\{0,1\}$ is not a multiple of $10$. Then, given $^{b-1}a\equiv{^{b}a}\pmod {10^{n}} \wedge ^{b-1}a \not\equiv{^{b}a}\pmod {10^{n+1}}$, $\forall b>a$, $V(a,b)$ returns the strictly positive integer such that $^{b}a\equiv{^{b+1}a}\pmod {10^{n+V(a,b)}}$ $\wedge$ $^{b}a \not\equiv{^{b+1}a}\pmod {10^{n+V(a,b)+1}}$, and we define $V(a,b)$ as the “congruence speed” of the base $a$ at the given height of its hyperexponent \linebreak $b\in\mathbb{N}-\{0\}$.
\end{definition}

Consequently, if $a = 2$, the tetrations for $b$ from $1$ to $5$ are $^12 = 2$, $^22 = 4$, $^32 =16$, $^42 = 65536$, and $^52 = \dots 19156736$ (respectively), so we can see that $V(2,1) = V(2,2) = 0$, whereas $V(2,3) = V(2,4) = 1$. \\
From \cite{11,12} we know that, for any given $a$ which is not a multiple of $10$, exists a unique “optimal” value, $\Bar{b}:=\min_{b}\{b\in \mathbb{N}-\{0\}: V(a,b) = V(a,b+k),\forall k\in \mathbb{N}_0\}$, of the hyperexponent which guarantees $V(a,\Bar{b}+k) = V(a)$ for any $k\in \mathbb{N}_0$ \cite{7}, reaching a height of $a+1$ represents a sufficient but not necessary condition for the constancy of the congruence speed, since \linebreak $V(a,a+1) = V(a)$ is always true. Improved bounds for $\Bar{b}(a)$ will be introduced in the next section.

\begin{definition} \label{def1.2}
Let the tetration base $a \in \mathbb{N}-\{0,1\}$ not be a multiple of $10$, and then let $\bar{b}:=\min_{b}\left\{b \in \mathbb{N}-\{0\}: V(a, b)=V(a, b+k), \forall k \in \mathbb{N}_{0}\right\}$. We define as ``constant congruence speed" of $a$ the non-negative integer $V(a):=V(a, \bar{b})$.
\end{definition}

\begin{definition} \label{def1.3}
Let $a=1$, then $V(1,1)=1$ and $V(1, b)=0=V(1)$ for any $b \geq 2$. We also define $V(0)=0$ for any $b \in \mathbb{N}-\{0\}$, since it is possible to extend the domain of tetration by considering that $\lim _{a \rightarrow 0}$$^{b}a:=$$^{b}0$ implies ${ }^{b} 0=1$ if $b$ is even and ${ }^{b} 0=0$ otherwise (see \cite{2}). Thus, for any $b \geq 1$, $^{b}0$ does not produce any stable digit, and $V(0, b)=V(0)=0$ by Definition \ref{def1.1}.
\end{definition}
Since, in general, $n$ depends on $a$ and $b$ (see Definition \ref{def1.1}), from here on, let us denote by $\# S_{c}(a, b)$ the number of stable digits of all the bases belonging to the congruence class \linebreak $c \pmod {10}$ (e.g., if we consider only tetration bases which have $3$ or $7$ as their rightmost digit, we will indicate the number of their stable digits, at height $b$, by $\# S_{\{3,7\}}(a, b)$).

For any given pair $(a, b)$ of positive integers, and assuming that $c \in\{1,2,3,4,5,6,7,8,9\}$, by definition, we have that
\begin{equation}\label{eq1}
    \# S_{c}(a, b):=\sum_{i=1}^{b} V(a, i)=\left\{\begin{array}{lll}
\sum_{i=1}^{b} V(a, i) & \text { if } & b<\bar{b} \\
\sum_{j=1}^{\bar{b}-1} V(a, j)+(b-\bar{b}+1) \cdot V(a) & \text { if } & b \geq \bar{b}
\end{array}\right..
\end{equation}

Now, in the rest of the present paper, let us assume that $a \in \mathbb{N}: a \not\equiv 0\pmod {10}$ does not belong to the congruence class $0$ modulo $10$, since, for any $b \geq 1$, if $a \equiv 0\pmod {10}$, then the number of stable digits of $^b{a}$ corresponds to $0$ if and only if $a=0$ (by Definitions \ref{def1.1} and \ref{def1.3}), and to the number of trailing zeros which appear at the end of ${ }^{b}((k+1) \cdot 10)$ otherwise (e.g., if $k=1$ and $b=2$, we have $^b{a}=^2\hspace{-1mm}{20}=2^{20} \cdot 10^{20}$ so that $\# S_{0}(20,2)=20$).

Section \ref{sec:2} describes how to calculate $V(a)$ given $a$, and consequently $\# S_{c}(a, b)$ at height $b$. \linebreak In Subsection \ref{sec:sub2.1} we present a formula that returns the exact value of $\# S_{c}(a, b)$ for any $c$ which is not coprime to $10$, whereas Subsection \ref{sec:sub2.2} is devoted to study the four remaining cases. Section \ref{sec:3} explains how to find which is the smallest hyperexponent $b$ such that $^b a$ returns any desired value of $\# S_{c}(a, b)$, for the chosen base $a$.

\section{A formula for the number of stable digits of $\linebreak$\boldmath$^b a:a \not\equiv 0 \pmod {10}$} \label{sec:2}

In this section we study $\# S_{c}(a, b)$ assuming that the last digit of the tetration base is not equal to zero so that the residues modulo $10$ of $c$ cover the whole set $\{1,2,3,4,5,6,7,8,9\}$.

For this purpose, given a prime number $p$, let us indicate the $p$-adic order on $\mathbb{Z}$ by \linebreak $v_{p}: \mathbb{Z} \rightarrow \mathbb{N} \cup \{+\infty\}$. $v_{p}$ is a valuation on $\mathbb{Z}$ (since $\{0\} \subset \mathbb{Z} \subset \mathbb{Q}$ and the statement follows by Theorem of Reference \cite{8}) and, given $d \in \mathbb{Z}$, it is defined as the mapping
$$
v_{p}(d):=\left\{\begin{array}{cc}
\max \left\{q \in \mathbb{N}_{0}: p^{q} \mid d\right\} & \text { if } d \neq 0 \\
+\infty & \text { if } d=0
\end{array}\right.\hspace{-1mm},
$$
(i.e., $v_{p}(d)$ is the function which returns the highest exponent $q$ such that $p^{q}$ divides $d$, so we write $v_{3}(18)=2$ since $p=3$ is a prime, $3^{2} \mid 18$, and $3^{3} \nmid 18$ ) \cite{9}.

Assuming $r \in \mathbb{Z}$, the $p$-adic valuation is characterized by some interesting properties \cite{1,8}, such as $v_{p}(d \cdot r)=v_{p}(d)+v_{p}(r), v_{p}(d+r) \geq \min \left\{v_{p}(d), v_{p}(r)\right\}$ (e.g., given any prime $p$, $\min \left\{v_{p}(d-1), v_{p}(d+1)\right\} \leq v_{p}(2 \cdot d)$ holds for any $d \hspace{0.3mm}$). Moreover, if $v_{p}(d) \neq v_{p}(r)$, then $v_{p}(d+r)=\min \left\{v_{p}(d), v_{p}(r)\right\}$ and, in particular, $v_{p}(d)<v_{p}(r) \Rightarrow v_{p}(d+r)=v_{p}(d)$.

Now, from \cite{12}, we know that the constant congruence speed of any given base $a$ which is not congruent to $0$ modulo $5$ is (always) less than or equal to the $5$-adic valuation of
\begin{itemize}
\vspace{-1mm}
    \item $a-1$ if $a \equiv 1 \hspace{-1mm}\pmod 5$;
    \vspace{-1mm}
    \item $a^{2}+1$ if $a \equiv\{2,3\}\hspace{-1mm}\pmod 5$;
    \vspace{-1mm}
    \item $a+1$ if $a \equiv 4 \hspace{-1mm}\pmod 5$;
\end{itemize}
\vspace{-1mm}
while, if $a: a \equiv 5 \hspace{-1mm}\pmod{10}$, we have that $V(a)+1=v_{2}\left(a^{2}-1\right)$ (see \cite{12}, Corollary 2.2, \linebreak pp. 55-56).

\begin{lemma}\label{lem2.1}
If the tetration base $a$ belongs to the congruence class $5$ modulo $20$, then \linebreak $V(a)=v_{2}(a-1)$. If $a$ belongs to the congruence class $15$ modulo $20$ , then $V(a)=v_{2}(a+1)$.
\end{lemma}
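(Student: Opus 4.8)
The plan is to reduce the lemma to the identity $V(a)+1 = v_2(a^2-1)$ recalled just above the statement (from \cite{12}), which holds for every base $a \equiv 5 \pmod{10}$ and hence for both subclasses $5$ and $15$ modulo $20$ treated here. Since $a^2-1 = (a-1)(a+1)$ and $v_2$ is additive on products, I would first rewrite
$$
v_2(a^2-1) = v_2(a-1) + v_2(a+1).
$$
The whole argument then hinges on determining, in each congruence class, which of the two consecutive even numbers $a-1$ and $a+1$ carries exactly one factor of $2$.

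For this I would use the elementary fact that, since $a$ is odd, $a-1$ and $a+1$ are two consecutive even integers, so exactly one of them is divisible by $4$ while the other has $2$-adic valuation precisely equal to $1$. The residue of $a$ modulo $20$ pins down which one: if $a \equiv 5 \pmod{20}$ then $a-1 \equiv 4 \pmod{20}$ is divisible by $4$, whereas $a+1 \equiv 6 \pmod{20}$ forces $v_2(a+1)=1$; symmetrically, if $a \equiv 15 \pmod{20}$ then $a+1 \equiv 16 \pmod{20}$ is divisible by $4$, whereas $a-1 \equiv 14 \pmod{20}$ forces $v_2(a-1)=1$.

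Substituting back finishes both claims. In the first case $v_2(a^2-1) = v_2(a-1) + 1$, so $V(a) = v_2(a^2-1) - 1 = v_2(a-1)$; in the second case $v_2(a^2-1) = 1 + v_2(a+1)$, so $V(a) = v_2(a+1)$.

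I do not expect a serious obstacle here: the only point requiring care is verifying that the \emph{smaller} of the two factors (namely $a+1$ in the class $5$, and $a-1$ in the class $15$) contributes exactly one factor of $2$ and no more. This is precisely where refining the modulus from $10$ to $20$ does its work, cleanly separating the two subcases; once this is in hand the result is a direct substitution into the valuation identity already established in \cite{12}.
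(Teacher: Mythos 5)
Your proof is correct, but it takes a genuinely more economical route than the one written in the paper. You import the identity $V(a)+1=v_2(a^2-1)$ for $a\equiv 5\pmod{10}$ directly from \cite{12} (Corollary 2.2, which the paper indeed recalls immediately before the lemma), apply additivity of $v_2$ to the factorization $a^2-1=(a-1)(a+1)$, and then settle each congruence class with the elementary fact that of the two consecutive even integers $a\pm1$ exactly one is divisible by $4$: for $a\equiv 5\pmod{20}$ you get $v_2(a+1)=1$ since $a+1\equiv 6\pmod{20}$, and for $a\equiv 15\pmod{20}$ you get $v_2(a-1)=1$ since $a-1\equiv 14\pmod{20}$; substitution then gives both claims, with no need for any lower bound on $V(a)$ or for the dichotomy $V(a)\in\{v_2(a-1),v_2(a+1)\}$. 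The paper's written proof is quite different: it works through Equation (26) of \cite{12}, the explicit parametrization of bases $a\equiv 5\pmod{10}$ having congruence speed $n$, tracks the parity of the auxiliary integers $h_1,h_2$ defined by $a\pm1=2^n h_1$ and $a^2-1=2^{n+1}h_2$, re-derives (rather than cites) the identity $V(a)+1=v_2(a^2-1)$, and extracts the split between the classes $5$ and $15$ modulo $20$ from the factorizations $h_2=(5k+3)h_1$ and $h_2=(5k+2)h_1$. Notably, the paper concedes in passing that a short argument essentially like yours would suffice, but deliberately takes the heavier path because the $h_1,h_2$ parity technique is the template it then extends to prove Theorem \ref{t2.1}. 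In short, your route buys brevity and minimal dependencies (one cited identity plus the parity of consecutive even numbers), and its only exposure is that it rests entirely on the imported corollary from \cite{12}, which the paper's own preamble legitimizes; the paper's route buys a self-contained rehearsal of the method needed for the main theorem, at the cost of considerable length.
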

\begin{proof}
 Let $a$ be such that $a \equiv 5\pmod {10}$. Since $a=10 \cdot k+5$ is an odd integer for any $k \in \mathbb{N}_{0}$, it follows that the argument of $v_{2}(a \pm 1)$ is even, and by definition we have \linebreak $a \pm 1=2^{n} \cdot h_{1}$, for some $n$ and $h_{1} \in \mathbb{N}-\{0\}$. Now, let us consider Equation (26) from \cite{12}, assume $n \geq 2$ and observe how, for any $m \in \mathbb{N}_{0}$, the parity of $\grave{h_{1}}:=\grave{h_{1}}(n)$ does not change from $\frac{\left(2^{n} \cdot\left((-1)^{n-1}+2\right)-i^{n \cdot(n-1)}\right)+m \cdot 10 \cdot 2^{n} \pm 1}{2^{n}}=\grave{h_1}$ to $\frac{\left(2^{n} \cdot\left((-1)^{n-1}+2\right)-i^{n \cdot(n-1)}\right) \pm 1}{2^{n}}=h_{1}$ so that $v_{2}(a-1)=n \Rightarrow\left(h_{1}=1\right.$ iff $n: n \equiv 2\pmod{4} \wedge h_{1}=3$ iff $\left.n: n \equiv 3\pmod{4}\right)$,
while 
$v_{2}(a\hspace{0.3mm}+\hspace{0.3mm}1)\hspace{0.7mm}=\hspace{0.8mm}n \hspace{0.8mm}\Rightarrow\hspace{0.8mm}\left(h_{1}\hspace{0.2mm}=\hspace{0.1mm}1 \hspace{0.7mm}\text { iff } \hspace{0.7mm}n\hspace{0.5mm} : \hspace{0.8mm}n\hspace{0.7mm} \equiv \hspace{0.8mm}0 \pmod{4} \hspace{0.45mm}\wedge \hspace{0.45mm}h_{1}\hspace{0.6mm}=\hspace{0.6mm}3 \hspace{0.5mm}\text { iff } \hspace{0.8mm}n : n \equiv 1 \pmod{4}\right)$.

Similarly, considering any $m \in \mathbb{N}_{0}$, the other half of the cases are covered by
$$
\frac{\left(2^{n} \cdot\left((-1)^{n}+8\right)+i^{n \cdot(n-1)}\right)+m \cdot 10 \cdot 2^{n} \pm 1}{2^{n}}=\grave{h_{1}} \Rightarrow \frac{\left(2^{n} \cdot\left((-1)^{n}+8\right)+i^{n \cdot(n-1)}\right) \pm 1}{2^{n}}=h_{1},
$$
and it follows that, for any $n \in \mathbb{N}-\{0,1\}, h_{1}=7$ or $h_{1}=9$.

Consequently, $\grave{h_{1}}$ is always an odd number (i.e., $h_{1} \in\{1,3,7,9\}$), and $a: a \equiv 5\pmod{10} \Rightarrow$ $V(a)=v_{2}(a-1) \vee v_{2}(a+1)$.
Now, $\max \left\{v_{2}(a-1), v_{2}(a+1)\right\}=\max \{1, V(a)\}=V(a)$ (since, for any given $k \in \mathbb{N}_{0}$ such that $\frac{a-1}{2}=\frac{10 \cdot k+4}{2}$, if $\frac{10 \cdot k+4}{2}$ is odd, then $\frac{10 \cdot k+6}{2}$ is even, and vice versa). Thus, $a: a \equiv 5 \pmod{10} \Rightarrow\left(v_{2}(a-1)=1 \wedge v_{2}(a+1)=V(a)\right) \vee\left(v_{2}(a-1)\right.$ $\left.=V(a) \wedge v_{2}(a+1)=1\right)$. Hence (see \cite{9}, Definition \ref{def2.1}), $v_{2}(a-1)+v_{2}(a+1)=V(a)+1$ $\Rightarrow V(a)+1=v_{2}((a-1) \cdot(a+1)) \Rightarrow V(a)+1=v_{2}\left(a^{2}-1\right)$.

\sloppy Thus, we have proved that if $a$ belongs to the congruence class $5$ modulo $10$, then \linebreak
$V(a)=\max \left\{v_{2}(a-1), v_{2}(a+1)\right\}$ $=v_{2}\left(a^{2}-1\right)-1$.

In order to conclude the proof, we need to show that $V(a)=v_{2}(a-1)$ if and only if $k$ is even. Although this result could be easily achieved by observing that if $\frac{a-1}{2}=5 \cdot k+2$, then $2^{2} \mid(a-1)$ if and only if $k$ is even so that $\left(v_{2}(a-1) \geq 2\right) \wedge\left(v_{2}(a+1)=1\right)$ and (since $V(a) \geq 2$ for any $a: a \equiv 5\pmod{10}$\cite{11,12}) $k: k \equiv 0\pmod{2} \Rightarrow v_{2}(a-1)=V(a)$, we take this opportunity to extend the basic technique that will be used for proving Theorem \ref{t2.1}. 

For this purpose, let $h_{2} \in \mathbb{N}-\{0\}$ and $h_{1} \in \mathbb{N}-\{0\}$ (as usual) so that $(10 \cdot k+5)^{2}-1$ $=2^{n+1} \cdot h_{2}$ and $10 \cdot k+5-1=2^{n} \cdot h_{1}$ (or equivalently, $10 \cdot k+5+1=2^{n} \cdot h_{1}$). Since we have already verified that $h_{1}$ is odd, we need to find for which values of $k$ we get an odd value of $h_{2}$.

Thus,
$$
10 \cdot k+5-1=2^{n} \cdot h_{2} \Rightarrow \frac{h_{2}}{h_{1}}=\frac{(10 \cdot k+5)^{2}-1}{2 \cdot(10 \cdot k+5-1)} \Rightarrow h_{2}=(5 \cdot k+3) \cdot h_{1}
$$
and it follows that $h_{2}$ is odd if and only if $k$ is even (whereas
$$
10 \cdot k+5+1=2^{n} \cdot h_{2} \Rightarrow \frac{h_{2}}{h_{1}}=\frac{(10 \cdot k+5)^{2}-1}{2 \cdot(10 \cdot k+5+1)} \Rightarrow h_{2}=(5 \cdot k+2) \cdot h_{1}
$$
so that $h_{2}$ is odd if and only if $k$ is odd).

Hence, $k \equiv 0\hspace{-1mm}\pmod{2} \Rightarrow V(10 \cdot k+5)=$ $v_{2}(10 \cdot k+4)$ and $k \equiv 1\hspace{-1mm}\pmod{2} \Rightarrow V(10 \cdot k+5)=v_{2}(10 \cdot k+6)$.

Therefore, $a=20 \cdot k+5 \Rightarrow V(a)=v_{2}(a-1)$ and $a=20 \cdot k+15 \Rightarrow V(a)=v_{2}(a+1)$. This completes the proof.
\end{proof}
Moreover, it is possible to invert Equations (\ref{eq6}), (\ref{eq7}), (\ref{eq10}), (\ref{eq11}), (\ref{eq14})-(\ref{eq17}) from Reference \cite{12} in order to simplify the computation of the exact value of $V(a)$ given $a$, extending Lemma \ref{lem2.1} from $a: a \equiv 5\hspace{-1mm}\pmod{10}$ to any tetration base which is not a multiple of $10$.
\begin{customthm}{2.1}\label{t2.1}
For any $a \in \mathbb{N}_{0}$ such that $a$ is not a multiple of $10$, the constant congruence speed of $a$ is given by Equation (\ref{eq2}),

\begin{equation}\label{eq2}
V(a)=\left\{\begin{aligned}
0 & \textnormal  { if } a \in\{0,1\} \\
\min \left\{v_{2}(a-1), v_{5}(a-1)\right\} & \textnormal  { if } a \equiv 1\hspace{-3.5mm}\pmod{100} \wedge a \neq 1 \\
\min \left\{v_{2}(a+1), v_{5}(a-1)\right\} & \textnormal  { if } a \equiv 51\hspace{-3.5mm}\pmod{100}\\
v_{5}\hspace{-1mm}\left(a^{2}+1\right) & \textnormal  { if } a \equiv\{2,8\}\hspace{-3.5mm}\pmod{10}\\
\min \left\{v_{2}(a+1), v_{5}\hspace{-1mm}\left(a^{2}+1\right)\right\} & \textnormal  { if } a \equiv\{7,43\}\hspace{-3.5mm}\pmod{100} \\
\min \left\{v_{2}(a-1), v_{5}\hspace{-1mm}\left(a^{2}+1\right)\right\} & \text { if } a \equiv\{57,93\}\hspace{-3.5mm}\pmod{100} \\
v_{5}(a+1) & \textnormal  { if } a \equiv 4\hspace{-3.5mm}\pmod {10}\\
v_{2}\hspace{-1mm}\left(a^{2}-1\right)-1 & \text { if } a \equiv 5\hspace{-3.5mm}\pmod{10} \\
v_{5}(a-1) & \textnormal  { if } a \equiv 6\hspace{-3.5mm}\pmod {10} \\
\min \left\{v_{2}(a-1), v_{5}(a+1)\right\} & \textnormal { if } a \equiv 49\hspace{-3.5mm}\pmod{100} \\
\min \left\{v_{2}(a+1), v_{5}(a+1)\right\} & \textnormal  { if } a \equiv 99\hspace{-3.5mm}\pmod{100} \\
1 & \textnormal  { otherwise }
\end{aligned}\right..
\end{equation}
\end{customthm}
\begin{proof}
Although $a \hspace{-0.3mm} \in\hspace{-0.3mm}\{0,1\}\hspace{-0.3mm} \Rightarrow \hspace{-0.3mm}V(a)=0$ by Definition \ref{def1.1}, Equation (18) of Reference \cite{12} gives us a sufficient condition (relative to the congruence class of $a$ modulo $25$) for $V(a)=1$. The set of all the tetration bases that are congruent to $\{2,3,4,6,8,9,11,12,13,14,16$, $17,19,21,22,23\}\pmod{25}$ contains the residual values of $a$ which satisfy the last line of (\ref{eq2}). It follows that Equation (\ref{eq2}) is true for every $a$ such that $V(a) \leq 1$. Thus, we assume $a: V(a) \geq 2$ in the rest of this proof.

Theorem \ref{t2.1} can be proved by combining Equations (6), (7), (10), (11), (14)-(17), (26) from Reference \cite{12} and Equation (\ref{eq2}) above so that it is possible to extend the method anticipated in the proof of Lemma \ref{lem2.1} (which we will explicitly illustrate here for the symmetrical cases $a \equiv 4\pmod{10}$ and $a \equiv 6\pmod{10}$ to all the congruence classes of $a$ modulo $10$ (or $100$) considered by (\ref{eq2}).

First of all, we show that Theorem \ref{t2.1} is true for any tetration base belonging to the congruence class $4$ modulo $10$. Thus, let $a=4+10 \cdot k$, for any $k \in \mathbb{N}_{0}$. Since Equation (\ref{eq10}) from Reference \cite{12} (which maps all the bases congruent to $4$ modulo $10$ that are characterized by a constant congruence speed of $n$) has already been proven to be true, we only need to check that, $\forall m \in \mathbb{N}_{0}$, $V(a)=v_{5}(a+1) \Rightarrow 5^{n}-1+m \cdot 2 \cdot 5^{n}+1=5^{n} \cdot h$ if and only if $m \equiv\{0,1,3,4\}\pmod{5}$ and $h$ is not divisible by $5$. We immediately see that $h$ is a multiple of $5$ if and only if $h$ belongs to the congruence class $5$ modulo $10$, since $h \equiv 0\pmod{10}$ $\Rightarrow 5^{n} \cdot h \equiv 0\pmod{10}$, but this would lead to a contradiction because $5^{n} \cdot h \equiv 0\pmod{10}$ implies that $5^{n} \cdot h$ cannot be written as $5^{n}-1+m \cdot 2 \cdot 5^{n}+1$, for the reason that $m \cdot 2 \cdot 5^{n} \equiv 0\pmod{10}$, and so $5^{n}+m \cdot 2 \cdot 5^{n} \equiv 5\pmod{10}$. Thus, for any $n \in \mathbb{N}-\{0,1\}$, $m: m \equiv\{0,1,3,4\}\pmod{5}$ represents a necessary and sufficient condition for $5^{n} \cdot(2 \cdot m+1)=5^{n} \cdot h$, since we have previously verified that $5^{n}-1+m \cdot 2 \cdot 5^{n}+1=5^{n} \cdot h$ holds if and only if $h \not \equiv 5\pmod{10}$. Hence, $2 \cdot m+1=h \hspace{1mm} \left(\forall m \in \mathbb{N}_{0}: m \not \equiv 2\pmod{5}\right)$. Similarly, by observing that $5^{n}+1+m \cdot 2 \cdot 5^{n}-1=5^{n} \cdot h \Rightarrow 2 \cdot m+1=h$, we can verify that $m \not \equiv 2\pmod{5}$ is also a necessary and sufficient condition for $V(a: a \equiv 6\pmod{10})$ $=v_{5}(a-1)$.

In order to complete the proof of Theorem \ref{t2.1}, we note that it is possible to adopt the same approach as above for all the congruence classes listed in (\ref{eq2}). Although $\exists^{\infty} a: \tilde{v}(a)>V(a)$, we can check that $a \equiv\{2,4,5,6,8\}\pmod{10} \Rightarrow \nexists a: \tilde{v}(a) \neq V(a)$ and (as a direct consequence) this reduces the non-strict inequality
$$
V\left(a_{\{2,8\}}(n)\right)=n \leq \hat{n}=\frac{\ln\hspace{-0.8mm} \left(\frac{{a_{\{2,8\}}\hspace{0mm}^{2}}\hspace{0.3mm}+\hspace{0.3mm}1}{h_{\{2,8\}}}\right)}{\ln (5)},
$$
stated by Equations (\ref{eq12}), (\ref{eq13}) from Reference \cite{12}, to an identity between $n$ and $\hat{n}$ (let $5 \nmid h$ and consider that, for any $k \in \mathbb{N}_{0}\hspace{0.3mm}$, $a^{2}+1=5^{n} \cdot h \Rightarrow 5^{n}=\frac{5 \cdot\left(20 \cdot k^{2}+8 \cdot k+1\right)}{h}$ if $a=10 \cdot k+2$ and $5^{n}=\frac{5 \cdot\left(20 \cdot k^{2}+32 \cdot k+13\right)}{h}$ if $a=10 \cdot k+8$, taking also into account Equations (\ref{eq16}), (\ref{eq17}) in \cite{12}).

In order to prove (\ref{eq2}), line 8 (from top to bottom), by using the method presented above instead of Lemma \ref{lem2.1}, an interesting computational exercise would be to autonomously verify that $a_{5}(n)$ from Equation (26) of Reference \cite{12} is equal to $\sqrt{2^{n+1} \cdot h+1}$ only if $h$ is odd, independently confirming that $v_{2}\left(a^{2}-1\right)-1=V(a)$ for any $a: a \equiv 5\hspace{-1mm}\pmod{10}$.

The next part of this proof is focused on the special subset $\mathcal{E}=\{1,51,43,93,7,57,49,99\}$ of the set of all the bases that are coprime to $10$. It is easy to verify from \cite{11,12} that if $a$ modulo $100$ does not belong to $\mathcal{E}$, then $V(a)=\tilde{v}(a)$ (while $a\hspace{-1mm}\pmod{100} \in \mathcal{E} \not\Rightarrow V(a) \neq \tilde{v}(a)$). Since $\exists^{\infty} a: \tilde{v}(a)>V(a)$ for each of the eight congruence classes of $a$ modulo $100$ which correspond to the eight elements of the set $\{1, 51, 43, 93, 7, 57, 49, 99\}$, we anticipate that the same idea (already described in the previous paragraphs) can be used to prove the given result for all the aforementioned congruence classes (in order to easily find the exact value of the constant congruence speed of any $a: a\hspace{-1mm}\pmod{100} \in \mathcal{E}$). For brevity, let us show how to apply the usual method with reference only to the congruence class $1$ of $a$ modulo $100$. Thus, if $\tilde{v}(a)=V(a)$ is true for any tetration base $a \in \mathbb{N}-\{1\}$ such that $a \equiv 1 \hspace{-1mm}\pmod{100}$, by merging Equation (6), Reference \cite{12}, line 2, and the first line of (\ref{eq2}) from the present Theorem \ref{t2.1}, we should be able to prove that $V(a)=v_{5}(a-1) \Rightarrow 10^{n} \cdot(m+1)+1-1=5^{n} \cdot h \quad(\forall n \in \mathbb{N}-\{0,1\})$ if and only if $m \in \mathbb{N}_{0}: m \not \equiv 9\hspace{-1mm}\pmod{10}$ (since the constraint $h: h \not \equiv 5\hspace{-1mm}\pmod{10}$ follows from the $5$-adic valuation definition). This is not possible to do, since $h \not \equiv 5\hspace{-1mm}\pmod{10}$ holds for any $m \not \equiv 4\hspace{-1mm}\pmod{5}$ too (e.g., if $n=2$ and $m=4$, then $V(501)=2 \neq 3=v_{5}(a-1)$, whereas $V(501)=v_{2}(a-1)$). Consequently, $V(a: a=1+100 \cdot k) \neq v_{5}(a-1)$ if and only if the first digit to the left of the rightmost (trailing) zero(s) is a five, while $V(1+100 \cdot k) \neq v_{2}(a-1)$ can occur only if that “key digit”  (see Definition \ref{def2.3} assuming $\alpha_{x_{2} x_{1}}=\alpha_{01}$ by Definition \ref{def2.2}) is even, and the conclusion follows since the sets of odd and even numbers have no intersection.

This technique is enough to confirm that $V(a)=\min \left\{v_{2}(a-1), v_{5}(a-1)\right\}$ for any $a \neq 1: a \equiv 1\hspace{-1mm}\pmod{100}$, and similarly showing that $V(100 \cdot k-1) \neq v_{5}(a+1)$ and $V(100 \cdot k-1)=v_{2}(a+1)$ if and only if the first digit to the left of the rightmost repunit ($9$'s) is a four (see Definitions \ref{def2.2} and \ref{def2.3} assuming $\alpha_{x_{2} x_{1}}=\alpha_{99}$).

Therefore, the statement of Theorem \ref{t2.1} can be proved by performing all the basic (tedious) calculations which arise when we merge Equations (6), (7), (10), (11), (14)-(18), (26) from Reference \cite{12} and Equation (\ref{eq2}) of the present paper.
\end{proof}
\begin{customcrl}{2.1}\label{cor2.1}
If a is congruent to $25$ modulo $100$, then $V(a)=v_{2}(a-1)$. If $a$ is congruent to $75$ modulo $100$, then $V(a)=v_{2}(a+1)$.
\end{customcrl}
\begin{proof}
We observe that $a \equiv 25\pmod{100} \Rightarrow a \equiv 5\pmod{20}$ and $a \equiv 75\pmod{100} \Rightarrow$ $a \equiv 15\pmod{20}$. Then, we invoke Lemma \ref{lem2.1} once again, and this concludes the proof of Corollary \ref{cor2.1}.
\end{proof}
\begin{corollary}\label{cor2.2}
For any $a \in \mathbb{N}-\{1\}$ such that $a \not \equiv 0\hspace{-0.5mm}\pmod{10}$,
\begin{equation} \label{eq3}
V(a) = \begin{cases}\mbox{ }\mbox{ }\min \left\{v_{2}(a-1),\right. \left.v_{5}(a-1)\right\} \mbox{ \textnormal{if} } a \equiv 1\hspace{-3mm}\pmod{20} \\
\mbox{ }\mbox{ }\min \left\{v_{2}(a+1),\right.\left.v_{5}(a-1)\right\}  \mbox{ \textnormal{if} } a \equiv 11\hspace{-3mm}\pmod{20} \\
\mbox{ }\mbox{ }\mbox{ }\mbox{ }\mbox{ }\mbox{ }\mbox{ }\mbox{ }\mbox{ }\mbox{ }\mbox{ }\mbox{ }\mbox{ }\mbox{ }\mbox{ }\mbox{ }\mbox{ }\mbox{ }\mbox{ }\mbox{ }\mbox{ }\mbox{ }\mbox{ }\mbox{ }\mbox{ }\mbox{ }\mbox{ }\mbox{ }\mbox{ }\mbox{ }v_{5}\left(a^{2}+1\right) \mbox{ \textnormal{if} } a \equiv\{2,8\}\hspace{-3mm}\pmod{10} \\
\min \left\{v_{2}(a+1),\right. \left.v_{5}\left(a^{2}+1\right)\right\} \textnormal { if } a \equiv\{3,7\}\hspace{-3mm}\pmod{20} \\
\min \left\{v_{2}(a-1),\right. \left.v_{5}\left(a^{2}+1\right)\right\} \textnormal { if } a \equiv\{13,17\}\hspace{-3mm}\pmod{20} \\
\mbox{ }\mbox{ }\mbox{ }\mbox{ }\mbox{ }\mbox{ }\mbox{ }\mbox{ }\mbox{ }\mbox{ }\mbox{ }\mbox{ }\mbox{ }\mbox{ }\mbox{ }\mbox{ }\mbox{ }\mbox{ }\mbox{ }\mbox{ }\mbox{ }\mbox{ }\mbox{ }\mbox{ }\mbox{ }\mbox{ }\mbox{ }\mbox{ }\mbox{ }\mbox{ }\mbox{ }\mbox{ }\mbox{ }v_{5}(a+1) \textnormal { if } a \equiv 4\hspace{-3mm}\pmod{10} \\
\mbox{ }\mbox{ }\mbox{ }\mbox{ }\mbox{ }\mbox{ }\mbox{ }\mbox{ }\mbox{ }\mbox{ }\mbox{ }\mbox{ }\mbox{ }\mbox{ }\mbox{ }\mbox{ }\mbox{ }\mbox{ }\mbox{ }\mbox{ }\mbox{ }\mbox{ }\mbox{ }\mbox{ }\mbox{ }\mbox{ }\mbox{ }\mbox{ }\mbox{ }\mbox{ }\mbox{ }\mbox{ }\mbox{ }v_{2}(a-1)  \textnormal { if } a \equiv 5\hspace{-3mm}\pmod{20} \\
\mbox{ }\mbox{ }\mbox{ }\mbox{ }\mbox{ }\mbox{ }\mbox{ }\mbox{ }\mbox{ }\mbox{ }\mbox{ }\mbox{ }\mbox{ }\mbox{ }\mbox{ }\mbox{ }\mbox{ }\mbox{ }\mbox{ }\mbox{ }\mbox{ }\mbox{ }\mbox{ }\mbox{ }\mbox{ }\mbox{ }\mbox{ }\mbox{ }\mbox{ }\mbox{ }\mbox{ }\mbox{ }\mbox{ }v_{2}(a+1) \textnormal { if } a \equiv 15\hspace{-3mm}\pmod{20} \\
\mbox{ }\mbox{ }\mbox{ }\mbox{ }\mbox{ }\mbox{ }\mbox{ }\mbox{ }\mbox{ }\mbox{ }\mbox{ }\mbox{ }\mbox{ }\mbox{ }\mbox{ }\mbox{ }\mbox{ }\mbox{ }\mbox{ }\mbox{ }\mbox{ }\mbox{ }\mbox{ }\mbox{ }\mbox{ }\mbox{ }\mbox{ }\mbox{ }\mbox{ }\mbox{ }\mbox{ }\mbox{ }\mbox{ }v_{5}(a-1) \textnormal { if } a \equiv 6\hspace{-3mm}\pmod{10} \\
\mbox{ }\mbox{ }\min \left\{v_{2}(a-1),\right. \left.v_{5}(a+1)\right\} \textnormal { if } a \equiv 9\hspace{-3mm}\pmod{20} \\
\mbox{ }\mbox{ }\min \left\{v_{2}(a+1),\right. \left.v_{5}(a+1)\right\} \textnormal { if } a \equiv 19\hspace{-3mm}\pmod{20}
\end{cases}\hspace{-2mm}.
\end{equation}
\end{corollary}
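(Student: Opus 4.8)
The plan is to deduce Corollary \ref{cor2.2} from Theorem \ref{t2.1} and Lemma \ref{lem2.1} by verifying that each line of (\ref{eq3}), which is indexed modulo $20$, reproduces the values of the finer classification modulo $100$ given by (\ref{eq2}). First I would clear the lines that need no refinement: the entries for $a \equiv \{2,8\} \pmod{10}$, $a \equiv 4 \pmod{10}$, and $a \equiv 6 \pmod{10}$ are identical in (\ref{eq2}) and (\ref{eq3}), so they carry over verbatim. For $a \equiv 5 \pmod{20}$ and $a \equiv 15 \pmod{20}$ I would simply invoke Lemma \ref{lem2.1}, which already states $V(a)=v_2(a-1)$ and $V(a)=v_2(a+1)$, respectively; since these two subclasses partition $a \equiv 5 \pmod{10}$, they reproduce the single entry $v_2(a^2-1)-1$ of (\ref{eq2}) through the identity $v_2(a^2-1)-1=\max\{v_2(a-1),v_2(a+1)\}$ already obtained within the proof of Lemma \ref{lem2.1}.

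The heart of the matter is the eight residue classes coprime to $10$, namely $a \equiv \{1,3,7,9,11,13,17,19\} \pmod{20}$, for each of which (\ref{eq3}) prescribes a value $\min\{v_2(\cdot),w\}$ with $w$ equal to $v_5(a-1)$, $v_5(a+1)$, or $v_5(a^2+1)$. The observation I would exploit is that, within each such class modulo $20$, the residues modulo $100$ at which Theorem \ref{t2.1} supplies an explicit formula are exactly the elements of the special set $\mathcal{E}=\{1,51,43,93,7,57,49,99\}$ lying in that class, and on those residues the relevant line of (\ref{eq2}) coincides syntactically with the corresponding line of (\ref{eq3}) (one only checks trivial implications such as $51\equiv 11$, $\{7,43\}\equiv\{3,7\}$, and $\{57,93\}\equiv\{13,17\}$ modulo $20$). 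Hence on the $\mathcal{E}$-residues there is nothing to prove.

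The crux is to handle the \emph{remaining} residues modulo $100$ inside each coprime class, which Theorem \ref{t2.1} assigns to its ``otherwise'' case $V(a)=1$. For these I would show that the valuation $w$ is exactly $1$, so that $\min\{v_2(\cdot),w\}=\min\{v_2(\cdot),1\}=1$ (using that $a$ is odd, whence $v_2(a\pm 1)\geq 1$), in agreement with Theorem \ref{t2.1}. This reduces to a single $5$-adic computation per family: for the $v_5(a\mp 1)$ entries, $w\geq 2$ forces $25\mid(a\mp 1)$, which pins $a$ to one residue modulo $100$---precisely the $\mathcal{E}$-residue---so $w=1$ on every other residue of the class; for the $v_5(a^2+1)$ entries, $a^2\equiv -1 \pmod{25}$ has only the roots $a\equiv 7,18 \pmod{25}$, and intersecting these with $\{3,7\}$ or $\{13,17\}$ modulo $20$ singles out exactly the two $\mathcal{E}$-residues $\{7,43\}$ or $\{57,93\}$ modulo $100$, leaving $w=1$ elsewhere. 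The main obstacle is the bookkeeping behind this last step: one must confirm, class by class, both that every non-$\mathcal{E}$ residue modulo $100$ indeed reduces into the list of residues modulo $25$ declared ``otherwise'' in the proof of Theorem \ref{t2.1}, and that $w$ there equals $1$ rather than merely $\geq 1$. Once these matchings are verified, every line of (\ref{eq3}) is seen to agree with (\ref{eq2}) and the corollary follows.
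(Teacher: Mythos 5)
Your proposal is correct and follows essentially the same route as the paper: both deduce (\ref{eq3}) by observing that each special class modulo $100$ of Theorem \ref{t2.1} lands in the corresponding class modulo $20$, invoking Lemma \ref{lem2.1} for $a \equiv 5, 15 \pmod{20}$, and letting the $\min$ formulas absorb the cases where $V(a)=1$. Your explicit verification that the relevant $5$-adic valuation equals exactly $1$ on the non-$\mathcal{E}$ residues (via $a \equiv \pm 7 \pmod{25}$ for $v_5(a^2+1)$, and $25 \mid a \mp 1$ pinning a single residue modulo $100$ otherwise) is precisely the detail the paper compresses into its assertion that there is ``no way and no need to specify when $V(a)$ is unitary,'' so your write-up is, if anything, more complete than the original.
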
 
\begin{proof}
Trivially,
$$
\begin{gathered}
a \equiv 1\hspace{-3mm}\pmod{100} \Rightarrow a \equiv 1\hspace{-3mm}\pmod{20},\\
a \equiv 51\hspace{-3mm}\pmod{100} \Rightarrow a \equiv 11\hspace{-3mm}\pmod{20},\\
a \equiv 43\hspace{-3mm}\pmod{100} \Rightarrow a \equiv 3\hspace{-3mm}\pmod{20},\\
a \equiv 93\hspace{-3mm}\pmod{100} \Rightarrow a \equiv 13\hspace{-3mm}\pmod{20},\\
a \equiv 7\hspace{-3mm}\pmod{100} \Rightarrow a \equiv 7\hspace{-3mm}\pmod{20},\\
a \equiv 57\hspace{-3mm}\pmod{100} \Rightarrow a \equiv 17\hspace{-3mm}\pmod{20},\\
a \equiv 49\hspace{-3mm}\pmod{100} \Rightarrow a \equiv 9\pmod{20},\\
a \equiv 99\hspace{-3mm}\pmod{100} \Rightarrow a \equiv 19\hspace{-3mm}\pmod{20}.
\end{gathered}
$$

Incorporating the statement of Lemma \ref{lem2.1}, all the allowed congruence classes of $a$ modulo $20$ have been covered so that there is no way and no need to specify when $V(a)$ is unitary.

This concludes the proof of Corollary \ref{cor2.2}.
\end{proof}
For the sake of simplicity, Definition \ref{def2.1} introduces a compact notation for a general (tight) upper bound of the exact value of $V(a)$, as it follows from Equations (\ref{eq2})\&(\ref{eq3}).

\begin{customdef}{2.1}\label{def2.1}
Let $a \neq 1$ be such that $a \not \equiv 0\hspace{-1mm}\pmod{10}$. We define
$$
\tilde{v}(a):= \begin{cases}v_{5}(a-1) & \textnormal { iff } a \equiv 1\hspace{-3mm}\pmod{5} \\ v_{5}\left(a^{2}+1\right) & \textnormal { iff } a \equiv\{2,3\}\hspace{-3mm}\pmod{5} \\ v_{5}(a+1) & \textnormal { iff } a \equiv 4\hspace{-3mm}\pmod{5} \\ v_{2}\left(a^{2}-1\right)-1 & \textnormal { iff } a \equiv 5\hspace{-3mm}\pmod{10} \end{cases}
$$
so that $\tilde{v}(a) \geq V(a)$ for any $a$.
\end{customdef}

\subsection{The exact value of \boldmath$\#S_{\{2,4,5,6,8\}}(a,b)$}
\label{sec:sub2.1}

Assuming radix-$10$ \cite{3}, as usual, we describe the structure $\# S(a, b)$ by providing an exact formula for any pair $(a, b)$ such that $a \equiv\{2,4,5,6,8\}\pmod {10} \wedge b \geq 3$, and very tight bounds which hold for all the bases $a: a \equiv\{1,3,7,9\}\pmod {10}$. In particular, if $a \equiv\{2,4,5,6,8\}\pmod{10}$, then $V(a)=\tilde{v}(a)$ (see Theorem \ref{t2.1} and Definition \ref{def2.1}).

Let $k \in \mathbb{N}_{0}$ and assume that $a=(20 \cdot k+2 \vee 20 \cdot k+18)$. Then, for any $a: a \equiv\{2,18\}\pmod{20}$, $^{1}a \equiv\{2,8\}\pmod{10}$, $^{2}a \equiv 4\pmod{10}$, and finally $^{3}a \equiv\hspace{0mm} ^{4}a \pmod{10}$ since $^{3}a \equiv 6\pmod{10}$. It follows that
\begin{equation}\label{eq4}
\# S_{\{2,8\}}(20 \cdot k+2 \vee 20 \cdot k+18, b)=\left\{\begin{array}{ccc}0 & \text { if } & b=1 \\ (b-2) \cdot V(a) & \text { if } & b \geq 2\end{array}\right.
\end{equation}
\[
=\left\{\begin{array}{ccc}0 & \text { if } \quad b=1 \\ (b-2) \cdot v_{5}\left(a^{2}+1\right) & \text { if } \quad b \geq 2\end{array} .\right.
\]

If $a: a \equiv\{12,8\}\pmod{20}$, then $\forall b \in \mathbb{N}-\{0\}$
\begin{equation}\label{eq5}
    \# S_{\{2,8\}}(20 \cdot k+12 \vee 20 \cdot k+8, b)=(b-1) \cdot V(a)=(b-1) \cdot v_{5}\left(a^{2}+1\right) .
\end{equation}

Even if the cases $a: a \equiv 4\pmod {10}$ and $a: a \equiv 6\pmod {10}$ have already been fully described in References \cite{10,12}, “repetita iuvant”, and so (for any $b$) we have
\begin{equation}\label{eq6}
  \# S_{4}(a, b)=(b-1) \cdot V(a)=(b-1) \cdot v_{5}(a+1) ;  
\end{equation}
while $a: a \equiv 6\pmod {10}$ trivially implies $V(a, 1) \geq 1 \Rightarrow V(a, b) \geq 1$ so that (for any $b$)
\begin{equation} \label{eq7}
 \# S_{6}(a, b \geq 2)=(b+1) \cdot V(a)=(b+1) \cdot v_{5}(a-1)   
\end{equation}
immediately $\hspace{2mm}$follows $\hspace{2mm}$from $\hspace{2mm} V(a \equiv 6\pmod {10}, \hspace{3.5mm} 1)+V(a \equiv 6\pmod {10}, \hspace{3.5mm} 2)=$\linebreak$3 \cdot V(a \equiv 6\pmod {10}, b \geq 3)=3 \cdot v_{5}(a-1)$.

If $a: a \equiv 5\pmod {10}$, then $V(a)=v_{2}\left(a^{2}-1\right)-1$ (see Lemma \ref{lem2.1}), and $\bar{b}(a)$ is always equal to $3$, with the only exception of the base $a=5$ (i.e., $\bar{b}(5)=4 \neq 3=\bar{b}(10 \cdot k+15)$, $\forall k \in \mathbb{N}_{0}$). It follows that

\begin{equation} \label{eq8}
    \# S_{5}(20 \cdot k+15, b \geq 2)=b \cdot\left(v_{2}\left(a^{2}-1\right)-1\right)+1;
\end{equation}
\begin{equation} \label{eq9}
    \# S_{5}(20 \cdot k+25, b \geq 2)=(b+1) \cdot\left(v_{2}\left(a^{2}-1\right)-1\right)\hspace{-0.7mm};
\end{equation}
\begin{equation} \label{eq10}
    \# S_{5}(5, b)= \begin{cases}1 & \text { iff } \hspace{2mm} b=1 \\
    4 & \text { iff } \hspace{2mm} b=2 \\
8+2 \cdot(b-3) & \text { iff } \hspace{2mm} b \geq 3\end{cases} .
\end{equation}

In order to complete the $\# S(a, b)$ map, we need to study all the tetration bases which are coprime to $10$, and this will be the goal of the next subsection.

\subsection{Bounding \boldmath$\#S_{\{1,3,7,9\}}(a,b)$ from \boldmath$V(a)$} \label{sec:sub2.2}
Let $a: a \not\equiv 0\pmod{10} \hspace{0.3mm}\wedge \hspace{0.3mm} a \neq 1$ be given (bearing in mind that $V(1,1)=1$, whereas $V(1, \bar{b}(1))=V(1,2)=V(1)=0$, and also $V(0, \bar{b}(0))=V(0,1)=V(0)=0$) so that $V(a)$ is fully described by Theorem \ref{t2.1}, and $V(a) \leq \tilde{v}(a)$ always holds (Definition \ref{def2.1}).

Under the above-mentioned condition $a \neq 1$, we note that if $V(a: \operatorname{gcd}(a, 10)=1, b)=0$, then $a=(20 \cdot k+3 \vee 20 \cdot k+7) \wedge b=1$, for any $k \in \mathbb{N}_0$. 

Thus,
\vspace{-1mm}
\begin{equation}\label{eq11}
\begin{aligned}
\# S_{\{3,7\}}(a=(20 \cdot k+3 \vee 20 \cdot k+7), b \geq \bar{b}(a)-1)
=&\left\{\begin{array}{c}
(b-1) \cdot V(a) \text { iff } V(a, 2)=V(a) \\
b \cdot V(a)+1 \hspace{3mm} \text { iff } V(a, 2)>V(a)
\end{array}\right.
\end{aligned}
\end{equation}
(e.g.,
$$
\begin{gathered}
V(6907922943,2)=11>9=v_5\left(6907922943^2+1\right) \\
\Rightarrow \# S_3(a=(20 \cdot 345396147+3), b \geq \bar{b}(a))=\# S_3(6907922943, b \geq 6) \\
=b \cdot V(a)+1 \hspace{0.3mm}\textnormal{,}
\end{gathered}
$$
while
$$
\begin{gathered}
V(107,2)=2=v_5\left(107^2+1\right) \\
\Rightarrow S_7(a=(20 \cdot 5+7), b \geq \bar{b}(a)-1)=\# S_7(107, b \geq 1) \\
=((b-1) \cdot V(a))\hspace{0.5mm}\textnormal{)}\hspace{0.3mm}.
\end{gathered}
$$

For any $b$, the above also implies the bound (\ref{eq12})
\begin{equation}\label{eq12}
\begin{aligned}
(b-1) \cdot V(a) & \leq \# S_{\{3,7\}}(a=(20 \cdot k+3 \vee 20 \cdot k+7), b) \\
& \leq b \cdot V(a)+1
\end{aligned}
\end{equation}
and the (weaker) relation (\ref{eq13}) follows
\begin{equation}\label{eq13}
\begin{aligned}
(b-1) \cdot\left(v_5\left(a^2+1\right)\right) & \leq \# S_{\{3,7\}}(a=(20 \cdot k+3 \vee 20 \cdot k+7), b \geq 2) \\
& \leq b \cdot\left(v_5\left(a^2+1\right)\right)\hspace{-0.7mm}.
\end{aligned}
\end{equation}

Finally, for any $a \equiv\{1, 3, 7, 9\}\hspace{-1mm}\pmod{10}$ which cannot be written as $20 \cdot k+3 \vee 20 \cdot k+7$, the number of stable digits of $^{b}a$ at height $b \geq \bar{b}(a)-1$ is $b \cdot V(a)$, or $b \cdot V(a)+1$, or $(b+1) \cdot V(a)$.

We can also derive the following general bound which holds for any $b \geq 2$,
\begin{equation}\label{eq14}
\begin{aligned}
b \cdot V(a) & \leq \# S_{\{1,3,7,9\}}(a \neq(20 \cdot k+3 \vee 20 \cdot k+7), b \geq 2) \\
& \leq(b+1) \cdot V(a)\hspace{0.3mm}\textnormal{,}
\end{aligned}
\end{equation}
and we additionally state that $\bar{b}(a) \leq v_5\left(a^2+1\right)+2$ is valid for every tetration base $a$ which is congruent to $\{3,7\}\pmod{10}$. The aforementioned limit on $\bar{b}(a)$ arises by combining the upper bounds by Equations (\ref{eq12})\&(\ref{eq14}) with the general constraint from Equation (\ref{eq15}) (see Section \ref{sec:3}), taking also into account that if $a \not \equiv\{0, 2, 8\} \pmod{10}$, then $V(a, 2)$ always assumes a strictly positive value.

Furthermore, if $a \not \equiv\{3,7\}\hspace{-1mm}\pmod{20}$, then $\bar{b}(a) \leq \tilde{v}(a)+1$, since we have not to worry about the case $V(a, 1)=0$, which cannot happen (the only $a$ which is characterized by $V(a, 2)>0$ and such that $\bar{b}(a)>\tilde{v}(a)+1$ is the base $5$, but we already know that
$\bar{b}(5)=\tilde{v}(5)+2$).\linebreak
In general, assuming $a \neq 5$, only a maximum of $\tilde{v}(a)$ additional iterations can occur from the first time that the congruence speed assumes a strictly positive value (i.e., the first step or the second one for any $a$ which is coprime to $10$) to the last time that $V(a, b)>V(a)$. Thus, for any $a$ which is not congruent to $0$ modulo $10$, the maximum theoretical value of $\bar{b}(a)$ is bounded above by $1+\tilde{v}(a)+1$.

Therefore, $\bar{b}(a) \leq \tilde{v}(a)+2$ for every $a: a \not\equiv 0\pmod{10}$ (let us observe that $a=1 \Rightarrow$ $\tilde{v}(1)=v_5(0)=\infty$ and $\bar{b}(1)=2$ by definition), and this result confirms also Conjecture 1 of Reference \cite{12}.

We can take a look at the congruence speed of the base $a=163574218751$ as a random check on the upper bound provided by (\ref{eq14}). \hspace{0.5mm}$a=163574218751$ is characterized by $\tilde{v}(163574218751)=v_5(163574218751-1)=13=V(163574218751)$, so we have $V(a, 1)=12$, $V(a, 2)=19$, $V(a, 3)=V(a, 4)=V(a, 5)=V(a, 6)=15$, and $V(a, b \geq 7)=$ $V(a)=13$. Hence, by Equation (\ref{eq1}),
$$
\begin{aligned}
\# S_1(163574218751, b \geq \bar{b}) &=12+19+15 \cdot 4+(b-(\bar{b}-1)) \cdot 13 \\
&=91+(b-\bar{b}+1) \cdot 13 \\
&=(6+1) \cdot 13+(b-6) \cdot 13 \\
&=(b+1) \cdot V(163574218751) .
\end{aligned}
$$

In addition, some more bases from each one of the four critical congruence classes modulo $10$, whose $\# S_{\{1,3,7,9\}}(a, b \geq \bar{b}(a))$ is uniquely given by $(b-1) \cdot V(a)$, or $b \cdot V(a)$, or $b \cdot V(a)+1$, or $(b+1) \cdot V(a)$, are shown below:
\begin{itemize}
    \item $\# S_1(74218751, b \geq 3)=b \cdot V(a)+1=b \cdot 8+1$,
        \vspace{-1.5mm}
    \item $\# S_1(45215487480163574218751, b \geq 13)=(b+1) \cdot V(a)=(b+1) \cdot 25$;
\end{itemize}
\begin{itemize}
    \item $\# S_3(143, b \geq 2)=(b-1) \cdot V(a)=(b-1) \cdot 2$,
        \vspace{-1.5mm}
    \item $\# S_3(133, b \geq 1)=b \cdot V(a)=b$,
        \vspace{-1.5mm}
    \item $\# S_3(847288609443, b \geq 5)=b \cdot V(a)+1=b \cdot 2+1$,
        \vspace{-1.5mm}
    \item $\# S_3(2996418333704193, b \geq 17)=(b+1) \cdot V(a)=(b+1) \cdot 16$;
        \vspace{0.5mm}
\end{itemize}
\begin{itemize}
    \item $\# S_7(907, b \geq 2)=(b-1) \cdot V(a)=(b-1) \cdot 2$,
    \vspace{-1.5mm}
    \item $\# S_7(177, b \geq 1)=b \cdot V(a)=b$,
        \vspace{-1.5mm}
    \item $\# S_7(807, b \geq 6)=b \cdot V(a)+1=b \cdot 3+1$,
        \vspace{-1.5mm}
    \item $\# S_7(23418092077057, b \geq 15)=(b+1) \cdot V(a)=(b+1) \cdot 14$;
            \vspace{0.5mm}
\end{itemize}
\begin{itemize}
    \item $\# S_9(599, b \geq 1)=b \cdot V(a)=b \cdot 2$,
        \vspace{-1.5mm}
    \item $\# S_9(499, b \geq 2)=b \cdot V(a)+1=b \cdot 2+1$,
        \vspace{-1.5mm}
    \item $\# S_9(781249, b \geq 4)=(b+1) \cdot V(a)=(b+1) \cdot 6$.
\end{itemize}
\begin{remark}
The laws that let us predict the value of every $\# S_{\{1,3,7,9\}}\left(a, b \geq \bar{b}(a)\right)$ (including all the examples above) can be derived from Reference \cite{12}, Equation (\ref{eq2}) for $i=1,3,4,9, 10,12$.
\end{remark}
\begin{customdef}{2.2}\label{def2.2}
Let $x_1$ and $x_2$ belong to the set $\{0,1,2,3,4,5,6,7,8,9\}$. Let $n \in \mathbb{N}-\{0\}$. We denote by $\alpha_{x_2 x_1}[n] \in\{0,1,2,3,4,5,6,7,8,9\}$ the n-th rightmost digit of the unique decadic integer $\alpha_{x_2 x_1}$ satisfying the fundamental equation $y^5=y$ (stated in Reference \cite{12}, Proposition 6, pp. 47-48), such that $\alpha_{x_2 x_1}[1]=x_1$ and $\alpha_{x_2 x_1}[2]=x_2$ (e.g., if $x_2=5$ and $x_1=1$, then $\alpha_{x_2 x_1}[2]=\alpha_{51}[2]=5$ by definition and $\alpha_{x_2 x_1}[3]=\alpha_{51}[3]=7$ by construction, since $\alpha_{x_2 x_1}=1-2 \cdot\left\{2^{5^n}\right\}_{\infty}=\ldots 87480163574218751$ is the only solution of $y^5=y$ in the ring of decadic integers such that $x_2=5$ and $x_1=1$ \cite{6}). For any given pair $\left(x_2, x_1\right)$, we indicate by $\alpha_{x_2 x_1} \pmod{10^n} \in \frac{\mathbb{Z}}{10^n \mathbb{Z}}$ the (decimal) integer formed by the $n$ rightmost digits of $\alpha_{x_2 x_1} \in \mathbb{Z}_{10}$ (e.g., if $n=4$ and $\left(x_2, x_1\right)=(9,9)$, then the selected solution of $y^5=y$ in the ring $\mathbb{Z}_{10}$ is $-1=\ldots 99999$, and we have $\alpha_{x_2 x_1} \pmod{10^n}=\alpha_{99} \pmod{10^4}=9999$).
\end{customdef}
For sake of clarity, the only decadic integers which satisfy the mentioned fundamental equation, $y^5=y$ (and, in general, $y^t=y$ for any $t \in \mathbb{Z}: t \geq 5$), are
$\alpha_{00}=0=\ldots 00000000000000000000$, \\
$\alpha_{01}=1=\ldots 0000000000000000000000000000000000000000000000000000000000000000001$, \\
$\alpha_{51}=1-2 \cdot\left\{5^{2^n}\right\}_{\infty}=$ \\
$\ldots 0219875666980838272377998885153153538207781991786760045215487480163574218751$,\\
$\alpha_{32}=\left\{2^{5^n}\right\}_{\infty}=$  \\
$\ldots 0275906862593839649523223304553032451441224165530407839804103263499879186432$, \\
$\alpha_{93}=\left\{5^{2^n}\right\}_{\infty}-\left\{2^{5^n}\right\}_{\infty}=$ \\
$\ldots 9614155303915741214287777252870390779454884838576212137588152996418333704193$, \\
$\alpha_{43}=-\left\{5^{2^n}\right\}_{\infty}-\left\{2^{5^n}\right\}_{\infty}=$ \\
$\ldots 9834030970896579486665776138023544317662666830362972182803640476581907922943$, \\
$\alpha_{24}=\left\{5^{2^n}\right\}_{\infty}-1=$ \\
$\ldots 9890062166509580863811000557423423230896109004106619977392256259918212890624$, \\
$\alpha_{25}=\left\{5^{2^n}\right\}_{\infty}=$ \\
$\ldots 9890062166509580863811000557423423230896109004106619977392256259918212890625$, \\
$\alpha_{75}=-\left\{5^{2^n}\right\}_{\infty}=$\\ $\ldots 0109937833490419136188999442576576769103890995893380022607743740081787109375$, \\
$\alpha_{76}=1-\left\{5^{2^n}\right\}_{\infty}=$\\
$\ldots 0109937833490419136188999442576576769103890995893380022607743740081787109376$,\\
$\alpha_{07}=-\left\{5^{2^n}\right\}_{\infty}+\left\{2^{5^n}\right\}_{\infty}=$\\
$\ldots 0385844696084258785712222747129609220545115161423787862411847003581666295807$,\\
$\alpha_{57}=\left\{5^{2^n}\right\}_{\infty}+\left\{2^{5^n}\right\}_{\infty}=$\\
$\ldots 0165969029103420513334223861976455682337333169637027817196359523418092077057$,\\
$\alpha_{68}=-\left\{2^{5^n}\right\}_{\infty}=$\\
$\ldots 9724093137406160350476776695446967548558775834469592160195896736500120813568$,\\
$\alpha_{49}=2 \cdot\left\{5^{2^n}\right\}_{\infty}-1=$\\
$\ldots 9780124333019161727622001114846846461792218008213239954784512519836425781249$, and
$\alpha_{99}=-1= \ldots 99999999999999999999999999999999999999999999999999999999999999$.
\begin{customdef}{2.3}\label{def2.3}
Let $n \in \mathbb{N}-\{0\}$ and $l \in \mathbb{N}-\{0,1\}$. Let the tetration base $10^{n-1}<a<10^n$ be such that $a:=\sum_{j=1}^n s_j \cdot 10^j$, for $s_1 \in\{1,3,7,9\}, s_{1<j<n} \in\{0,1,2,3,4,5,6,7,8,9\}$, and (if $n$ is not unitary) $s_n \in\{1,2,3,4,5,6,7,8,9\}$. Given one pair $\left(x_2, x_1\right)$ as specified by Definition \ref{def2.2}, if $a \neq \alpha_{x_2 x_1}\pmod{10^n}$, then $l \leq n$, and we single out $s_l \in\{0,1,2,3,4,5,6,7,8,9\}$, \linebreak the l-th rightmost digit of the given tetration base, as the key-digit of \linebreak $a=$ $s_{n-} s_{(n-1)-} \dots_{-} s_{l-} s_{(l-1)-} \dots_{-}{s_{2-}}x_1$ such that $a \equiv \alpha_{x_2 x_1}\pmod{10^{l-1}} \hspace{1mm} \wedge$ \linebreak $a \not\equiv \alpha_{x_2 x_1}\pmod{10^l}$ (i.e., $\forall \bar{\jmath} \in\{1,2, \ldots, l\hspace{-1.5mm}-\hspace{-1.5mm}1\}, \quad\left(s_1=x_1 \hspace{1.5mm}\wedge\hspace{1.5mm} x_2=x_2\left(x_1\right)\right) \Rightarrow$ \linebreak $\left(\alpha_{x_2 x_1}[\bar{\jmath}]=s_j \wedge \alpha_{x_2 x_1}[l] \neq s_l\right)$, see Definition \ref{def2.2}).
If $a=\alpha_{x_2 x_1}\pmod{10^n}$, then $l>n$ and $s_l:=0$, since we shall assume $s_{(n+w)}=0$ for any $w \in \mathbb{N}-\{0\}$ (e.g., $a=57=0057$ implies $l=4$ because $\alpha_{57}\pmod{10^4} = 7057$, and so we have $s_l-\alpha_{57}[l]=\frac{a-\alpha_{57}\hspace{-1mm}
\pmod{10^l}}{10^{(l-1)}}=s_4-\alpha_{57}[4]=-7$).
\end{customdef}
When we take into account only $\tilde{v}(a)$ trying to guess the exact value of $V(a)$ by Definition \ref{def2.1}, the most obvious critical bases are originated by those digits of $\alpha_{51}$, $\alpha_{43}$, $\alpha_{93}$, $\alpha_{07}$, $\alpha_{57}$, and $\alpha_{49}$ which are equal to $5$ (e.g., $\forall l \in \mathbb{N}: \alpha_{43}[l]=5$, $\tilde{v}\left(\alpha_{43}\pmod {10^{l-1}}\right)>V\left(\alpha_{43}\pmod{10^{l-1}}\right)$). Thus, let us select one of the aforementioned decadic integers, that we will indicate by $\alpha_{\overline{x_2 x_1}}$, and perform a surgical$\pmod{10^n}$ cut on that string, just at the right of a casual digit $5$ (i.e., given one pair $\left(x_2, x_1\right) \in\{(0,7),(4,3),(4,9),(5,1),(5,7),(9,3)\}$,$\hspace{2mm}\alpha_{\overline{x_2 x_1}}[n+1]=5$ shall be satisfied for $\alpha_{x_2 x_1}=\alpha_{\overline{x_2 x_1}}\hspace{1mm}$) so that the decimal integer $\alpha_{\overline{x_2 x_1}}\pmod{10^n}$ we get is a pretty special tetration base characterized by $\tilde{v}\left(\alpha_{\overline{x_2 x_1}}\pmod{10^n}\right)>V\left(\alpha_{\overline{x_2 x_1}}\pmod{10^n}\right)$, as long as $\alpha_{\overline{x_2 x_1}}[1] \neq 7$. At this point, it is important to point out that the tetration base $7$ is very peculiar (i.e., $\tilde{v}(7)=V(7)$ despite the trueness of $\alpha_{07}[2]=5$, see (\ref{eq15})\&(\ref{eq16})) because $x_2=5$ if and only if $x_1=7 \vee x_1=1$, but $V(1)=0$ by definition, so we can see that the choice of the base 7 implies $\alpha_{07}[l]=\alpha_{07}[3]=8 \neq 5$, instead of $\alpha_{57}[l]=\alpha_{57}[2]=5$, since $7 \equiv 7\pmod{20}$ and $7 \not\equiv 17\pmod{20}$ share the common rule $v_5\left(a^2+1\right)=V(a)$ as stated by (\ref{eq16}), lines 15 and 17 (from top to bottom). Furthermore, inside the proof of Theorem \ref{t2.1}, for any $n \in \mathbb{N}-\{0,1\}$ and $l \in \mathbb{N}-\{0,1,2\}$, we have already spoiled that $a:=\alpha_{99}\pmod{10^n}$ violates the $V(a)=\tilde{v}(a)$ (wrong) conjecture \cite{7} if and only if $\alpha_{99}[l]=4$ (see Corollary \ref{cor2.3} and Reference \cite{12} for further details).

To be fair, as stated in Reference \cite{12}, Proposition 6, p. 47, there is one last fundamental intersection that arises from the solution $y_{15}(t)=1$ of $y^t=y$ over the commutative ring of decadic integers, considering the corresponding decimal integers modulo $10^n$ (by the well-known ring homomorphism).

For this purpose, as a clarifying example, let us show how $y_{15}(5): 1^5=1$ works (see \cite{12}, pp. 47-48). Given $a(n):=\sum_{j=1}^n s_j \cdot 10^j$, assume that $n \in \mathbb{N}-\{0,1\}$ and $s_{j=1}=1$, let $s_{1<j<n} \in\{0,1,2,3,4,5,6,7,8,9\}$ and $s_n \in\{1,2,3,4,5,6,7,8,9\}$ be defined by Definition \ref{def2.3}. When $s_2=0$, for any given set $\left\{s_3, s_4, \ldots, s_{n-1}, s_n\right\}$ as specified above, we can verify that $\# S_1(a(n), b \geq 2)=(b+1) \cdot V(a(n))$ is always true, whereas, if $s_2=s_n$ is an arbitrary element of the set $\{1,2,3,4,5,6,7,8,9\}$, then $\# S_1(a(2), b \geq 2)=(b+1) \cdot V(a(2))$ $=b \cdot V(a(2))+1$ if and only if $s_2 \neq 5$ (where $5=\alpha_{51}[2]$ by Equation (2) from Reference \cite{12}). Since $V(51,1)=2, V(51,2)=3$, and $V(51,3)=V(51)=2$, it follows that $\# S_1(51, b \geq 2)=b \cdot V(51)+1$ is not equal to $(b+1) \cdot V(51)$ (i.e., $V(a) \neq 1 \Rightarrow$ $b \cdot V(a)+1 \neq(b+1) \cdot V(a)$).

In view of the fact that there are many other exceptions to the $\tilde{v}(a)=V(a)$ rule, let us introduce the explanatory Corollary \ref{cor2.3} which reveals, in details, the general law (involving all the tetration bases which are coprime to $10$) at the bottom of the universal inequality $V(a) \leq \tilde{v}(a)$.
\begin{customcrl}{2.3}\label{cor2.3}
Let $n \in \mathbb{N}-\{0,1\}$ and let $l \in \mathbb{N}-\{0,1,2\}$ be such that $s_l$, the $l$-th rightmost digit of the tetration base $a:=\sum_{j=1}^n s_j \cdot 10^j$, is outlined by Definition \ref{def2.3}. For any $a$ coprime to $10$, the constant congruence speed is given by
\begin{equation}\label{eq15}
V(a)=\left\{\begin{aligned}
v_5(a-1) & \textnormal { if }\left(s_2, s_1\right)=(0,1) \wedge s_l \neq 5 \\
v_2(a-1) & \textnormal { if }\left(s_2, s_1\right)=(0,1) \wedge s_l=5 \\
v_5(a-1) & \textnormal { if }\left(s_2, s_1\right)=(5,1) \wedge\left|s_l-\alpha_{51}[l]\right| \neq 5 \\
v_2(a+1) & \textnormal { if }\left(s_2, s_1\right)=(5,1) \wedge\left|s_l-\alpha_{51}[l]\right|=5 \\
v_5\left(a^2+1\right) & \textnormal { if }\left(s_2, s_1\right)=(4,3) \wedge\left|s_l-\alpha_{43}[l]\right| \neq 5 \\
v_2(a+1) & \textnormal { if }\left(s_2, s_1\right)=(4,3) \wedge\left|s_l-\alpha_{43}[l]\right|=5 \\
v_5\left(a^2+1\right) & \textnormal { if }\left(s_2, s_1\right)=(9,3) \wedge\left|s_l-\alpha_{93}[l]\right| \neq 5 \\
v_2(a-1) & \textnormal { if }\left(s_2, s_1\right)=(9,3) \wedge\left|s_l-\alpha_{93}[l]\right|=5 \\
v_5\left(a^2+1\right) & \textnormal { if }\left(s_2, s_1\right)=(0,7) \wedge\left|s_l-\alpha_{07}[l]\right| \neq 5 \\
v_2(a+1) & \textnormal { if }\left(s_2, s_1\right)=(0,7) \wedge\left|s_l-\alpha_{07}[l]\right|=5 \\
v_5\left(a^2+1\right) & \textnormal { if }\left(s_2, s_1\right)=(5,7) \wedge\left|s_l-\alpha_{57}[l]\right| \neq 5 \\
v_2(a-1) & \textnormal { if }\left(s_2, s_1\right)=(5,7) \wedge\left|s_l-\alpha_{57}[l]\right|=5 \\
v_5(a+1) & \textnormal { if }\left(s_2, s_1\right)=(4,9) \wedge\left|s_l-\alpha_{49}[l]\right| \neq 5 \\
v_2(a-1) & \textnormal { if }\left(s_2, s_1\right)=(4,9) \wedge\left|s_l-\alpha_{49}[l]\right|=5 \\
v_5(a+1) & \textnormal { if }\left(s_2, s_1\right)=(9,9) \wedge s_l \neq 4 \\
v_2(a+1) & \textnormal { if }\left(s_2, s_1\right)=(9,9) \wedge s_l=4 \\
1 & \textnormal { if }\left(s_2, s_1\right) \notin\{(0,1),(0,7),(4,3),(4,9),(5,1),(5,7),(9,3),(9,9)\}
\end{aligned}\right..
\end{equation}
\end{customcrl}
\begin{proof}The last line of (\ref{eq15}) trivially follows from (18) of Reference \cite{12}. $\hspace{-0.7mm}$In order to prove the main result of Corollary \ref{cor2.3}, we can verify that the constraints given by (\ref{eq15}) on the pairs $\left(s_2, s_1\right)$ represent sufficient conditions for the trueness of the stated $2$-adic / $5$-adic valuation rules. \linebreak
For this reason, let $n,(l-1) \in \mathbb{N}-\{0,1\}$ be as specified by the corollary itself. We note that Equation (2) of \cite{12}, (by construction) implies that, $\forall\left(s_2, s_1\right) \in\{(0,1),(0,7),(4,3),(4,9),$ $(5,1),(5,7),(9,3),(9,9)\}$, $\left|s_l-\alpha_{x_2 x_1}[l]\right|=5 \Rightarrow V(a)+1 \leq \tilde{v}(a)$ and from the proof of Theorem \ref{t2.1} we immediately deduce that $\left(\left(s_{\bar{\jmath}}=9, \forall \bar{\jmath} \in\{1,2, \ldots, l-1\}\right) \wedge s_l=4\right) \Rightarrow$ $\tilde{v}(a) \geq V(a)+1$ (at this purpose, we point out that $\left|s_l-\alpha_{99}[l]\right|=|4-9|=5$ for any $l \in \mathbb{N}-\{0,1,2\}$). We can repeat the process for all the given $\left(s_2, s_1\right)$ pairs, confirming the aforementioned relation.

Now, let $\breve{v}_2(a):=v_2(a-1)$ if $a \equiv\{1,49,57,93\}\pmod{100}$ and $\breve{v}_2(a):=v_2(a+1)$ if $a \equiv\{7, 43, 51, 99\}\pmod{100}$. It follows that $\left|s_l-\alpha_{x_2 x_1}[l]\right| \equiv 0\pmod{2} \Rightarrow V(a)+1$ $\leq \breve{v}_2(a)$ always holds, whereas $\left|s_l-\alpha_{x_2 x_1}[l]\right| \equiv 1\pmod{2} \Rightarrow V(a)=\breve{v}_2(a)$ is generally true (the direct verification can be accomplished through the standard technique, performing simple calculations, as described in the proof of Lemma \ref{lem2.1}).

Basically, given $\left(s_2, s_1\right):\left(s_1=x_1 \wedge s_2=x_2\right)$ and assuming that $\operatorname{gcd}\left(s_1, 10\right)=1$, it is not hard to verify that a sufficient condition which guarantees that the $2$-adic valuation rules (by their own) hold is that $\left|s_l-\alpha_{x_2 x_1}[l]\right| \notin\{0,2,4,6,8\}$, whereas (symmetrically) the stated $5$-adic valuation rules cannot be violated if $\left|s_l-\alpha_{x_2 x_1}[l]\right| \neq 5$ (in view of the fact that any issue arising from the theoretical collision $\left|s_l-\alpha_{x_2 x_1}[l]\right|=0 \Rightarrow 5\hspace{1mm}\mid \left|s_l-\alpha_{x_2 x_1}[l] \right|$ is prevented by Definition \ref{def2.3}, which implies $s_l \neq \alpha_{x_2 x_1}[l]$).

\sloppy Since the collision subtended by $2 \hspace{1mm}\mid \left|s_l-\alpha_{x_2 x_1}[l] \right|$ and the other one arising from $\left|s_l-\alpha_{x_2 x_1}[l]\right|=5$ cannot occur at the same time for any given choice of $\left(a\left(s_1\right), \alpha_{x_2 s_1}\right)$ (i.e., if $\left|s_l-\alpha_{x_2 x_1}[l]\right|=5$, then $a:\left|s_l-\alpha_{x_2 x_1}[l]\right| \notin\{0,2,4,6,8\}$, and vice versa), it is possible to opportunistically combine the $5$-adic and the $2$-adic valuation rules, in order to map the constant congruence speed of any selected $a \in \mathbb{N}-\{0,1\}$. Thus, if $a: \operatorname{gcd}(a, 10)=1$, then the constant congruence speed of every base which is greater than $10$ is described by (\ref{eq15}), and this concludes the proof of Corollary \ref{cor2.3}.
\end{proof} 
A major result of the present paper is that, by combining the statement of Theorem \ref{t2.1} and (\ref{eq15}), we are finally able to provide an explicit, unique and compact, formula that returns the exact value of the constant congruence speed of every given tetration base $a \in \mathbb{N}_0$. In order to achieve this goal, let $n, l, \alpha_{x_2 x_1}$, and the elements of the set $\left\{s_1, \ldots, s_n\right\}$ be defined as in Definition \ref{def2.3} (taking into account that $n=1 \Rightarrow s_2=0=s_l$ and that if $\nexists\left(x_2, x_1\right):\left(x_2=s_2 \wedge x_1=s_1\right)$, then $V(a)=1$ for any $a$ which does not belong to the congruence class $0$ modulo $10$).

In the light of the above, the direct map of $V(a)$ is given by (\ref{eq16}),
\begin{equation}\label{eq16}
\footnotesize{V(a)=\left\{\begin{aligned}
0 & \text { if } a: a \in\{0,1\} \\
v_5\left(a^2+1\right) & \text { if } a: a \equiv\{2, 8\}\hspace{-2.5mm}\pmod{10} \\
v_5(a+1) & \text { if } a: a \equiv 4\hspace{-2.5mm}\pmod{10} \\
v_5(a-1) & \text { if } a: a \equiv 6\hspace{-2.5mm}\pmod{10} \\
v_2(a-1) & \text { if } a: a \equiv 5\hspace{-2.5mm}\pmod{20} \\
v_2(a+1) & \text { if } a: a \equiv 15\hspace{-2.5mm}\pmod{20} \\
v_5(a-1) & \text { if } a: a \equiv 1\hspace{-2.5mm}\pmod{20} \wedge s_l \neq 5 \wedge a \neq 1 \\
v_2(a-1) & \text { if } a: a \equiv 1\hspace{-2.5mm}\pmod{20} \wedge s_l=5 \\
v_5(a-1) & \text { if } a: a \equiv 11\hspace{-2.5mm}\pmod{20} \wedge\left|s_l-\alpha_{51}[l]\right| \neq 5 \\
v_2(a+1) & \text { if } a: a \equiv 11\hspace{-2.5mm}\pmod{20} \wedge\left|s_l-\alpha_{51}[l]\right|=5 \\
v_5\left(a^2+1\right) & \text { if } a: a \equiv 3\hspace{-2.5mm}\pmod{20} \wedge\left|s_l-\alpha_{43}[l]\right| \neq 5 \\
v_2(a+1) & \text { if } a: a \equiv 3\hspace{-2.5mm}\pmod{20} \wedge\left|s_l-\alpha_{43}[l]\right|=5 \\
v_5\left(a^2+1\right) & \text { if } a: a \equiv 13\hspace{-2.5mm}\pmod{20} \wedge\left|s_l-\alpha_{93}[l]\right| \neq 5 \\
v_2(a-1) & \text { if } a: a \equiv 13\hspace{-2.5mm}\pmod{20} \wedge\left|s_l-\alpha_{93}[l]\right|=5 \\
v_5(a^2+1) & \text { if } a: a \equiv 7\hspace{-2.5mm}\pmod{20} \wedge\left|s_l-\alpha_{07}[l]\right| \neq 5 \\
v_2(a+1) & \text { if } a: a \equiv 7\hspace{-2.5mm}\pmod{20} \wedge\left|s_l-\alpha_{07}[l]\right|=5 \\
v_5\left(a^2+1\right) & \text { if } a: a \equiv 17\hspace{-2.5mm}\pmod{20} \wedge\left|s_l-\alpha_{57}[l]\right| \neq 5 \\
v_2(a-1) & \text { if } a: a \equiv 17\hspace{-2.5mm}\pmod{20} \wedge\left|s_l-\alpha_{57}[l]\right|=5 \\
v_5(a+1) & \text { if } a: a \equiv 9\hspace{-2.5mm}\pmod{20} \wedge\left|s_l-\alpha_{49}[l]\right| \neq 5 \\
v_2(a-1) & \text { if } a: a \equiv 9\hspace{-2.5mm}\pmod{20} \wedge\left|s_l-\alpha_{49}[l]\right|=5 \\
v_5(a+1) & \text { if } a: a \equiv 19\hspace{-2.5mm}\pmod{20} \wedge s_l \neq 4 \\
v_2(a+1) & \text { if } a: a \equiv 19\hspace{-2.5mm}\pmod{20} \wedge s_l=4 \\
\emptyset & \text { iff } a: a \equiv 0\hspace{-2.5mm}\pmod{10} \wedge a \neq 0
\end{aligned}\right.\hspace{2mm}.}
\end{equation}
Given the standard system of counting, radix-$(2 \cdot 5)$ (since most human beings are born with two hands and five fingers on each hand), we have shown which is the law at the bottom of the congruence speed constancy, the special property of tetration that describes the asymptotic behavior of $\# S_c(a, b)$.

Now, it is not hard to figure out how our result can be naturally extended to many different numeral systems, radix-$g$. Thus, if we simply assume that $g$ is a “valid square-free base” (see \cite{13}, Definition, p. 3, and also the Remarks, p. 4) satisfying the condition stated in Proposition 1 of Reference \cite{3}, then $V(a, b)=V(a)$ must necessarily hold for sufficiently large $b:=b(a)$, as long as $a$ is a nontrivial base such that $a \not \equiv 0\pmod {g}$.

\section{Some useful properties of the congruence speed} \label{sec:3}
The regularity features of the congruence speed \cite{11,12} can be very useful when performing peculiar mental calculations, finding also the precise value of $\# S(a, b)$ by Equation (\ref{eq1}).

\sloppy We start by saying that, for any $a: a \not \equiv 0\hspace{1mm}\pmod{10} \wedge a \neq 1, V(a, 1) \leq V(a, 2)$ always holds, so let $a$ be such that $V(a, 2)=0$ (i.e., assuming $a>1, V(a, 2)=0 \Leftrightarrow$ \linebreak
$a=((20 \cdot k+2) \vee(20 \cdot k+18)), \forall k \in \mathbb{N}_0$). Thus,
\begin{equation}\label{eq17}
  V(a, b) \geq V(a, b+1), \hspace{1mm} \forall b \geq 3 . 
\end{equation}
If $a: a \not \equiv\{0,2,10,18\}\hspace{-1mm}\pmod{20} \wedge a \neq 5$ (i.e., $a \not \equiv\{0,2,10,18\}\hspace{-1mm}\pmod{20} \Rightarrow V(a, 2) \neq 0$), then
\begin{equation}\label{eq18}
    V(a, b) \geq V(a, b+1), \hspace{1mm} \forall b \geq 2 .
\end{equation}

A general rule which is very easy to keep in mind is that $V(a, 1)+V(a, 2) \leq 3 \cdot V(a) \leq 3 \cdot \tilde{v}(a)$, with the unique exception represented by the very special base $a=1$ (since $V(1)=0$, whereas $V(1,1)>0$). Furthermore, for any $k \in \mathbb{N}_0$, let us underline that $V(a, b)=0$ if and only if $b=1$ and $a \equiv\{2,3,7,12,4,14,8,18\}\pmod{20} \vee a=0$, or if $b=2$ and $a \equiv\{2,18\}\pmod{20} \vee a=1 \vee a=0$, or if $b \geq 2$ and $a=1 \vee a=0$ (see Equations (\ref{eq4})-(\ref{eq6}), (\ref{eq11})).

Moreover, for any $a: a \not\equiv 0\pmod{10} \wedge a \neq 1$, the periodicity properties of $V(a)$ (see Equation (\ref{eq3})) let us immediately detect whether $V(a)$ is greater than $1$ or not, by simply checking the congruence $a \equiv\{2,3,4,6,8,9,11,12,13,14,16,17,19,21,22,23\} \pmod{25}$ \cite{12}; if so, $V(a)=1$, and $V(a) \geq 2$ otherwise. We can go even further and try to memorize the next set of $900$ values, $1 \leq a \not\equiv 0 \pmod{10}<1000$, in order to answer in less than one second (without writing or calculating anything) whether $V(a)=0, V(a)=1, V(a)=2$, or even $V(a) \geq 3$ (see \cite{11}, p. 252). Thus, knowing that $V(1)=0$ by definition, $\forall a \in \mathbb{N}-\{1\}: a \not\equiv 0\pmod{10}$, we have
\begin{equation}\label{eq19}
\left\{\begin{array}{c}
V(a)=1 \Leftrightarrow a\hspace{-3mm}\pmod{25} \in \mathcal{C}^C \\
\text { where } \mathcal{C}^C:=\{2,3,4,6,8,9,11,12,13,14,16,17,19,21,22,23\} \hspace{0.3mm}; \vspace{2mm} \\
V(a)=2 \Leftrightarrow a\hspace{-3mm}\pmod{40} \in\{5,35\} \hspace{0.5mm}\vee \\
\left(a \hspace{-3mm}\pmod{25} \in\{1, 7, 18, 24\} \wedge a \hspace{-3mm}\pmod{1000} \notin \mathcal{Q}^C\right); \vspace{2mm}\\
V(a) \geq 3 \Leftrightarrow a\hspace{-3mm}\pmod{40} \in\{15,25\} \vee a\hspace{-3mm}\pmod{1000} \in \mathcal{Q}^C \\
\text { where } \mathcal{Q}^C:=\left\{\begin{array}{c}
1,57,68,124,126,182,193,249,318,374,376,432,568, \\
624,626,682,751,807,818,874,876,932,943,999
\end{array}\right\} .
\end{array}\right.
\end{equation}

We can also take $\# S_c(a, b)$ and check the stable digits ratio of any integer tetration whose base is not congruent to $0$ modulo $10$. For any given ${ }^b a$, the stable digits ratio of is
\begin{equation}\label{eq20}
R(a, b):=\frac{\# S_c(a, b)}{\lceil\log _{10}({^b}a)\rceil},
\end{equation}
where the ceiling $\lceil q\rceil$ denotes the function which takes the rational number $q$ as input and returns as output the least integer greater than or equal to $q$.

In conclusion, given any tetration base $a: a \not \equiv 0\pmod{10} \wedge a \neq 1$, if we choose beforehand the desired number of stable digits (let us indicate it by $\# T(a) \in \mathbb{N}_0$ ) of $^b a$, we will easily compute which is the smallest hyperexponent
$$
\overline{\bar{b}}(a):=\min_{b}\left\{b \in \mathbb{N}-\{0\}: \sum_{i=1}^b V(a, i) \geq \# T(a)\right\}
$$
such that $^{\overline{\bar{b}}} a$ originates at least $\#T(a)$ stable digits (see \cite{10}, pp. 13-14).

Thus, ${ }^b a \equiv \hspace{0.3mm} ^{\overline{\bar{b}}} $$a$$\pmod {10^{\# T(a)}}$ for any $b(a) \geq \overline{\bar{b}}(a)$, and $\sum_{i=1}^b V(a, i)$ can be simplified using the relations shown in the present paper, e.g., by Equation (\ref{eq5}), for any $k \in \mathbb{N}_0$,
$$
\begin{aligned}
a=10 \cdot k+4 & \Rightarrow \sum_{i=1}^b V(a, i)=(b-1) \cdot v_5(a+1) \\
& \Rightarrow \overline{\bar{b}}(a)=\min _b\left\{b \in \mathbb{N}-\{0\}: b \geq\Bigl\lceil\frac{\# T(a)}{v_5(a+1)}\Bigr\rceil+1\right\} .
\end{aligned}
$$
\section{Conclusion} \label{sec:Concl}

The number of stable digits of every integer tetration \(_{}^{b}a\) such that \( a\) is not a multiple of \( 10\) is strongly related to the constant congruence speed of the base, and \(\overline{b}(a)\leq\tilde{v}(a)+2\) is a sufficient condition to guarantee the constancy of the congruence speed of \( a\) for any hyperexponent at or above \(\overline{b}(a)\) so that \(V\hspace{-0.7mm}\left(a, \overline{b}(a)+k\right) = V(a)\) for any\textit{ }\( k\in\mathbb{N}_{0}\). For this purpose, Theorem \ref{t2.1} provides an easy way to calculate the constant congruence speed of any \( a\geq 2\) that is not a multiple of \( 10\), while (\ref{eq16}) let us see in details the intrinsic structure of \( V(a)\). Finally, by combining the \( V(a)\) inverse map, 
shown in Reference  \cite{12},with a compact set of equations which allows an accurate calculation of \( \# S(a, b)\), we are starting to see some symmetrical harmony in the fascinating, chaotic, behavior of hyper-$4$.

\makeatletter
\renewcommand{\@biblabel}[1]{[#1]\hfill}
\makeatother

\end{document}